\newtheorem{thm}[equation]{Theorem}
\newtheorem{cor}[equation]{Corollary}
\newtheorem{lem}[equation]{Lemma}
\theoremstyle{definition}
\newtheorem{rem}[equation]{Remark}
\newtheorem{exa}[equation]{Example}
\numberwithin{equation}{section}
\newcommand{\Hom}{\operatorname{Hom}}
\newcommand{\letbe}{\mathbin{\raisebox{.3pt}{:}\!=}}
\newcommand{\colim}{\operatorname{colim}}
\newcommand{\bC}{\mathbb{C}}
\newcommand{\bQ}{\mathbb{Q}}
\newcommand{\bR}{\mathbb{R}}
\newcommand{\bZ}{\mathbb{Z}}
\newcommand{\srf}[1]{\mbox{${\text{\it SR}^F}$}}
\newcommand{\djs}{\mbox{\it DJ\/}}
\newcommand{\cat}[1]{\mbox{\sc #1}}
\newcommand\Z{\bZ}
\newcommand\z{\Z}
\newcommand\q{\bQ}
\newcommand\Q{\bQ}
\newcommand\C{\bC}
\newcommand\R{\bR}
\newcommand \lra{\longrightarrow}
\newcommand \da{\downarrow}
\def \larrow#1{\,\stackrel{#1}\lra\,}
\newcommand{\catk}{\cat{cat}(K)}
\newcommand{\Top}{\cat{Top}}
\newcommand{\nz}{\newline}
\newcommand{\quer}{\overline}
\newcommand{\0}{\emptyset}
\begin{document}
\bibliographystyle{plain}

\title{Colorings of simplicial complexes and vector
bundles over
Davis-Januszkiewicz  spaces}

\author{Dietrich Notbohm}
\address{
Department of Mathematics,
Vrije Universiteit,
Faculty of Sciences,
De Boelelaan 1081a,
1081 HV Amsterdam,
The Netherlands
}
\email{notbohm@few.vu.nl}

\subjclass{55R10, 57R22, 05C15}

\keywords
{Davis-Januszkiewicz spaces, vector bundle, characteristic classes,
colorings, simplicial complexes}

\begin{abstract}
We show that coloring properties of a simplicial complex $K$
are reflected by splitting properties of a bundle over the associated
Davis-Januszkiewicz space whose Chern classes are given by
the elementary symmetric polynomials in the generators of the Stanley-Reisner algebra
of $K$.
\end{abstract}

\maketitle

\markright{COLORINGS AND VECTOR BUNDLES} 
%
%
%
%
%
%
%
%
%

\section{Introduction}\label{introduction}

 For a simplicial complex $K$, Davis and Januszkiewicz constructed a family
of spaces, all of which are homotopy equivalent, and
whose integral cohomology is isomorphic to the associated
Stanley-Reisner algebra $\Z[K]$ \cite[Section 4]{daja}. We denote a  generic
model for this homotopy type by
$\djs(K)$. In the above mentioned influential paper, Davis and Januszkiewicz
also constructed a particular complex vector bundle
$\lambda$ over $\djs(K)$ whose Chern classes are given by
the elementary symmetric
polynomials in the generators of $\z[K]$ \cite[Section 6]{daja}. This vector bundle
is of particular interest. For example, if
$K$ is the dual of the boundary of a simple polytope $P$, then
the associated moment angle
complex $Z_K$ is a manifold
and the realification $\lambda_\bR$ of $\lambda$
is stably isomorphic to the bundle given by applying the Borel
construction to the tangent bundle of $Z_K$.
And if $M^{2n}$ is a quasitoric  manifold over $P$,
then again the
Borel construction applied to the tangent bundle of $M^{2n}$ produces a
vector bundle stably isomorphic to $\lambda_\R$  \cite[Theorem 6.6, Lemma 6.5]{daja}.

Davis and Januszkiewicz also noticed that, if $K$ is the dual of the boundary of a polytope
of dimension $n$ and
admits a coloring with $n$ colors,
the bundle $\lambda$ splits into a direct sum of $n$ complex line bundles  and a trivial bundle
\cite[Section 6.2]{daja}.
We are interested in generalizations of this observation. In fact, we will show that
a simplicial complex admits a coloring with $r$ colors precisely when $\lambda$ splits stably
into a direct sum of $r$ linear complex bundles and a trivial bundle.
We will also show that a similar result holds for the realification of $\lambda$.

To make our statements more precise we have to fix notation and
recall some basic constructions.
Let $[m]\letbe \{1,...,m\}$ be the set of the first $m$ natural numbers.
A finite abstract simplicial complex $K$ on $[m]$ is given by
a set of faces $\alpha\subseteq [m]$
which is closed under the formation of subsets.
We consider the empty set $\0$ as a face of $K$.
The dimension $\dim \alpha$ of a face $\alpha$
is given in terms of its cardinality by $|\alpha| -1$,
and the dimension $\dim K$ of
$K$ is the maximum of the dimensions of its faces.
The most basic examples are given by
full simplices. For $\alpha \subseteq [m]$ we denote by $\Delta[\alpha]$
the simplicial complex which consists of all possible subsets of $\alpha$.
Then $\Delta[\alpha]$ is an $(|\alpha|-1)$-dimensional simplex.
The full simplex $\Delta[m]$ contains $K$ as a subcomplex, and if $\sigma\in K$ then
$\Delta(\sigma)\subset K$ is a subcomplex as well.

A {\it regular $r$-paint coloring}, an {\it $r$-coloring} for short,
of a simplicial complex $K$ is a non degenerate simplicial map
$g\colon K\lra \Delta [r]$, i.e. $g$ maps each face of $K$ isomorphically on a
face of $\Delta [r]$. The inclusion $K\subset \Delta[m]$ always provides an m-coloring.
If $\dim(K)=n-1$, then $K$ may only allow $r$-colorings for $r\geq n$.

For a commutative ring $R$ with unit we denote by $R[m]\letbe R[v_1,...,v_m]$
the
graded polynomial algebra generated by the algebraically independent
elements $v_1,...,v_m$ of degree 2, one for each vertex of $K$.
For each subset $\alpha\subseteq [m]$ we denote by
$v_\alpha\letbe \prod_{j\in \alpha} v_j$ the square free monomial whose factors
are in 1 to 1 correspondence with vertices contained in $\alpha$.
The graded
Stanley-Reisner algebra $R[K]$ associated with $K$
is defined as the quotient $R[K]\letbe R[m]/I_K$,
where  $I_K\subset R[m]$ is the ideal generated
by all elements $v_\mu$ such that $\mu\subseteq [m]$ is not a face of $K$.

Since $BT^m$ is an Eilenberg-MacLane space realizing the polynomial algebra
$\Z[m]$, the projection $\Z[m] \lra \Z[K]$ can be realized by a map
$f:DJ(K) \lra BT^m$. We can think of $T^m$ as the maximal torus of the unitary group $U(m)$.
The pull back along the composition $DJ(K) \lra BT^m \lra BU(m)$ of the universal bundle over
$BU(m)$ gives a vector bundle $\lambda \da  DJ(K)$.
This is the vector bundle studied by Davis and Januszkiewicz and mentioned above.
The total Chern class $c(\lambda)=1+ \sum_i c_i(\lambda)$ of $\lambda$ is then
given by $c(\lambda)=\prod_{i=1}^m(1+v_i)\in \Z[K]$.

The realification of a complex vector bundle $\xi$ is denoted by $\xi_\R$.
Confusing notation we will denote by $\C$ and $\R$
a 1-dimensional trivial
complex or real  vector  bundle over a space $X$.
Now we can state our main theorem.

\begin{thm} \label{thm1}
Let $K$ be an finite simplicial complex over the vertex set $[m]$.
Then the following conditions are equivalent.
\nz
(i)
$K$ admits an $r$-coloring $K\lra \Delta[r]$.
\nz
(ii) The vector bundle $\lambda$
splits into a direct sum $ (\bigoplus_{i=1}^r \nu_i) \oplus \C^{m-r}$
of $r$ complex line bundles $\nu_i$
and a trivial $(m-r)$-dimensional complex bundle.
\nz
(iii) The realification $\lambda_\R$ of $\lambda$
splits into direct a sum $(\bigoplus_{i=1}^r \theta_i) \oplus \R^{2(m-r)}$
of $r$ $2$-dimensional
real bundles $\theta_i$ and a trivial $2(m-r)$-dimensional real bundle.
\nz
(iv) The vector bundle $\lambda$ is stably isomorphic to a direct sum $\bigoplus_{i=1}^r \nu_i$
of $r$ complex line bundles.
\nz
(v)The realification $\lambda_\R$ is stably isomorphic to a direct sum
$\bigoplus_{i=1}^r \theta_i$ of $r$ $2$-dimensional
real bundles.
\end{thm}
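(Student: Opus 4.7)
The structure of my proof will be to close the cycle
\[
\mathrm{(i)} \Rightarrow \mathrm{(ii)} \Rightarrow \mathrm{(iii)} \Rightarrow \mathrm{(v)} \Rightarrow \mathrm{(i)},
\]
picking up $\mathrm{(ii)} \Rightarrow \mathrm{(iv)} \Rightarrow \mathrm{(v)}$ along the way. Here $\mathrm{(ii)} \Rightarrow \mathrm{(iii)}$ and $\mathrm{(iv)} \Rightarrow \mathrm{(v)}$ are just realification of the summands, while $\mathrm{(ii)} \Rightarrow \mathrm{(iv)}$ and $\mathrm{(iii)} \Rightarrow \mathrm{(v)}$ absorb the trivial summand into a stabilization. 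The substantive work therefore lies in $\mathrm{(i)} \Rightarrow \mathrm{(ii)}$ and $\mathrm{(v)} \Rightarrow \mathrm{(i)}$.

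For $\mathrm{(i)} \Rightarrow \mathrm{(ii)}$, I would exploit the functoriality of the Davis--Januszkiewicz construction. A non-degenerate simplicial map $g\colon K \to \Delta[r]$ induces a continuous map $Dg\colon DJ(K) \to DJ(\Delta[r]) = BT^r$; pulling back the $j$-th canonical line bundle on $BT^r$ produces a complex line bundle $\nu_j$ on $DJ(K)$ with $c_1(\nu_j) = \sum_{g(i)=j} v_i$ in $\Z[K]$. Set $\mu = \bigoplus_{j=1}^r \nu_j$. On each subspace $BT^\sigma \subseteq DJ(K)$ attached to a face $\sigma \in K$, both $\lambda$ and $\mu \oplus \C^{m-r}$ restrict to $\bigoplus_{i \in \sigma} L_i \oplus \C^{m-|\sigma|}$, since non-degeneracy of $g$ forces each colour class to meet $\sigma$ in at most one vertex. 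To globalize these cellwise identifications into a genuine bundle isomorphism, I would compare K-theory classes: with $\tilde v_i = [f^*L_i] - 1$, an elementary expansion gives
\[
[\lambda] - [\mu \oplus \C^{m-r}] \;=\; -\sum_{j=1}^r \sum_{\substack{T \subseteq g^{-1}(j) \\ |T| \geq 2}} \prod_{i \in T} \tilde v_i
\]
in $K^0(DJ(K))$. Every $T$ appearing here has all of its vertices in a single colour class and is therefore a non-face of $K$; the product $\prod_{i \in T}\tilde v_i$ restricts to zero on every $BT^\sigma$ and vanishes in $K^0(DJ(K))$ by the Stanley--Reisner-type description of this K-theory, which follows from the collapse of the Atiyah--Hirzebruch spectral sequence for $DJ(K)$ (whose cohomology is concentrated in even degrees).

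For $\mathrm{(v)} \Rightarrow \mathrm{(i)}$, I would recover the coloring from the total Stiefel--Whitney class of $\lambda_\R$. Simple-connectivity of $DJ(K)$ makes each rank-$2$ real bundle $\theta_j$ orientable and hence a complex line bundle, so $w(\theta_j) = 1 + \alpha_j$ with $\alpha_j \in H^2(DJ(K); \bF_2)$. Multiplicativity and stability of $w$, combined with $w(\lambda_\R) = \prod_i (1+v_i)$ in $\bF_2[K]$, give the identity
\[
\prod_{i=1}^m (1+v_i) \;=\; \prod_{j=1}^r (1+\alpha_j) \qquad \text{in } \bF_2[K].
\]
Restricting this identity to each torus cell $BT^\sigma \subseteq DJ(K)$ via the ring map $\bF_2[K] \to \bF_2[v_i : i \in \sigma] = H^*(BT^\sigma; \bF_2)$ (which sends $v_i \mapsto v_i$ for $i \in \sigma$ and $v_i \mapsto 0$ otherwise) yields an identity in a polynomial ring, which is a UFD in which each $1 + v_i$ is irreducible. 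Unique factorization then forces the non-zero restrictions $\alpha_j|_\sigma$ to form a permutation of $\{v_i : i \in \sigma\}$. Applying this with $\sigma$ a $0$-simplex $\{i\}$ shows that each $v_i$ appears in the support of exactly one $\alpha_j$, defining a function $g\colon [m] \to [r]$ by $g(i) = j$ whenever $v_i$ lies in the support of $\alpha_j$; applying it to higher-dimensional faces shows $g$ is injective on each face of $K$, so defines a non-degenerate simplicial map $K \to \Delta[r]$, which is the required $r$-coloring.

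The main obstacle is the K-theoretic vanishing lemma at the heart of $\mathrm{(i)} \Rightarrow \mathrm{(ii)}$: passing from the cellwise identification of $\lambda$ and $\mu \oplus \C^{m-r}$ to a genuine bundle isomorphism on the infinite-dimensional space $DJ(K)$ requires a Stanley--Reisner-style presentation of $K^0(DJ(K))$ in which non-face products of the generators $\tilde v_i$ vanish, and this has to be established separately. By contrast, the combinatorial step in $\mathrm{(v)} \Rightarrow \mathrm{(i)}$ reduces cleanly, once the Stiefel--Whitney identity is pulled back to individual torus cells, to the unique factorization of $\prod(1+v_i)$ in polynomial rings over $\bF_2$.
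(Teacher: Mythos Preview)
Your argument for $\mathrm{(v)}\Rightarrow\mathrm{(i)}$ is correct and is a genuinely different route from the paper's. The paper works rationally with Pontrjagin classes: it passes to an odd-rank orientation $\rho_t\colon c(K)\to BSO(2s+1)$, uses the classification $[BS,BG]\cong \hom(S,T_G)/W_G$ for tori $S$, and reads off from $p(\rho_t|_{BT^\alpha})=\prod_{i\in\alpha}(1-v_i^2)$ that the induced homomorphism $T^\alpha\to T^r$ is, up to the Weyl group of $SO(2s+1)$, a coordinatewise inclusion. Your mod~$2$ Stiefel--Whitney argument, reducing to unique factorization of $\prod_{i\in\sigma}(1+v_i)$ in the polynomial ring $\bF_2[v_i:i\in\sigma]$ after restriction to each torus cell, is more elementary and avoids the Lie-theoretic input entirely; the paper's approach, on the other hand, never needs to argue that the colourings obtained on different faces are compatible, since it works directly with the global lift $\hat\rho_t\colon c(K)\to BT^r$.

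There is, however, a real gap in your $\mathrm{(i)}\Rightarrow\mathrm{(ii)}$. Your computation correctly shows that $[\lambda]-[\mu\oplus\C^{m-r}]$ is a sum of non-face monomials in the classes $\tilde v_i$, and (granting the Stanley--Reisner description of $K^0(DJ(K))$) that this difference vanishes in $K^0$. But equality of $K$-theory classes is exactly stable isomorphism, not isomorphism: you have proved $\mathrm{(i)}\Rightarrow\mathrm{(iv)}$, not $\mathrm{(i)}\Rightarrow\mathrm{(ii)}$. Because $DJ(K)$ is infinite-dimensional, there is no general cancellation theorem letting you strip off a trivial summand, and nothing else in your cycle produces $\mathrm{(ii)}$ or $\mathrm{(iii)}$; they remain unestablished. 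The paper closes this gap by a direct geometric construction: over the moment angle complex $Z_K$ it writes down, for each colour class $S_i$ of size $s_i$, an explicit $T^m$-equivariant bundle map
\[
f\colon \C\times\C^{s_i-1}\times Z_K\longrightarrow \C^{s_i}\times Z_K,\qquad
f(y,x,z)_j = y\!\!\prod_{k\neq j,\,k\in S_i}\!\!\overline{z}_k + z_j x_j,
\]
and checks it is a fibrewise monomorphism using that at most one coordinate $z_j$ with $j\in S_i$ can vanish on $Z_K$ (precisely because two vertices of the same colour never span a face). This gives an honest isomorphism $\nu_i\oplus\C^{s_i-1}\cong\bigoplus_{j\in S_i}\lambda'_j$, and summing over $i$ and applying the Borel construction yields the unstable splitting $\mathrm{(ii)}$.
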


Several of our vector bundles  will be constructed
as homotopy orbit spaces.
For a compact Lie group $G$ and a $G$-space $X$,
the {\it Borel construction} or {\it homotopy orbit space}
$EG\times_G X$ will be denoted by $X_{hG}$. If $\eta\,\da\, X$ is an $n$-dimensional
$G$-vector bundle over $X$
with total space $E(\eta)$, the Borel construction establishes a fibre bundle
$E(\eta)_{hG} \lra X_{hG}$. In fact, this is an $n$-dimensional vector bundle over
$X_{hG}$, denoted by $\eta_{hG}$. For definitions and details see \cite{segal}

Let $M^{2n}$ be a quasitoric manifold over the simple polytope $P$. That is that
$M^{2n}$ carries a $T^n$-action, which is locally standard and
that the orbit space $M^{2n}/T^n=P$ is a
simple polytope. The Borel construction produces a space
$(M^{2n})_{hT^n}$, which is homotopy
equivalent to $DJ(K_P)$, where
$K_P$ is the simplicial complex dual to the boundary of $P$. For details see
\cite[Section 4.2]{daja}.
Let $\tau_M$ denote the tangent bundle of $M^{2n}$.
Davis and Januczkiewicz showed that the  vector bundle
$(\tau_M){hT^n}\da DJ(K)$ and $\lambda_{\R}$ are stably isomorphic
as real vector bundles over
$DJ(K_P)$ \cite[Section 6]{daja}.
We can draw the following corollary of Theorem \ref{thm1}

\begin{cor} \label{corollary} \label{equisplit}
Let $M^{2n}$ be a quasitoric manifold over a simple polytope $P$. Let $K_P$ be the
simplicial complex dual to the boundary of $P$.
If the tangent bundle $\tau_M$ of $M^{2n}$ is stably equivariantly isomorphic to a direct sum of $r$
2-dimensional equivariant $T^n$-bundles over $M^{2n}$, then $K_P$ admits an $r$-coloring.
\end{cor}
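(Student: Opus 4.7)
The plan is to reduce the corollary directly to condition (v) of Theorem \ref{thm1} by transporting the given equivariant splitting on $M^{2n}$ to a splitting of $\lambda_\bR$ over $\djs(K_P)$ via the Borel construction, and then invoking the theorem to extract a coloring of $K_P$.

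More concretely, suppose $\tau_M$ is stably equivariantly isomorphic to $\bigoplus_{i=1}^r \eta_i$, where each $\eta_i$ is a $2$-dimensional equivariant $T^n$-bundle over $M^{2n}$. The first step is to apply the Borel construction to both sides. Since the Borel construction is additive (it sends a $G$-equivariant direct sum of $G$-bundles to the corresponding direct sum of the associated bundles over $X_{hG}$) and sends trivial equivariant bundles (with trivial $T^n$-action) to trivial bundles over $M^{2n}_{hT^n}$, a stable equivariant isomorphism downstairs yields a stable (non-equivariant) isomorphism
\[
(\tau_M)_{hT^n} \;\cong_{\text{st}}\; \bigoplus_{i=1}^r (\eta_i)_{hT^n}
\]
of real vector bundles over $M^{2n}_{hT^n}$. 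Each $(\eta_i)_{hT^n}$ is a $2$-dimensional real bundle over $M^{2n}_{hT^n}$.

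Next, we use the homotopy equivalence $M^{2n}_{hT^n}\simeq \djs(K_P)$ recalled in the paragraph preceding the corollary, together with the Davis-Januszkiewicz result \cite[Section 6]{daja} stating that $(\tau_M)_{hT^n}$ and $\lambda_\bR$ are stably isomorphic as real vector bundles over $\djs(K_P)$. Combining these with the previous display gives a stable isomorphism
\[
\lambda_\bR \;\cong_{\text{st}}\; \bigoplus_{i=1}^r \theta_i
\]
over $\djs(K_P)$, where $\theta_i\letbe (\eta_i)_{hT^n}$ is $2$-dimensional. This is exactly condition (v) of Theorem \ref{thm1}, so by the implication (v)$\Rightarrow$(i) already contained in the main theorem, $K_P$ admits an $r$-coloring.

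The only real content of the argument is thus the passage from the equivariant hypothesis to a non-equivariant stable splitting over the Borel construction; this is the step that demands some care but is routine once one notes that trivial equivariant summands map to trivial summands and that the Borel construction is compatible with direct sums. The heavy lifting—translating a splitting of $\lambda_\bR$ over $\djs(K_P)$ into combinatorial data on $K_P$—has already been done in Theorem \ref{thm1} and is simply invoked here.
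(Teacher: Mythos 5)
Your argument is correct and is essentially the proof the paper intends: apply the Borel construction to the stable equivariant splitting of $\tau_M$, use that $(\tau_M)_{hT^n}$ and $\lambda_\R$ are stably isomorphic over $\djs(K_P)\simeq (M^{2n})_{hT^n}$, and invoke the implication (v)$\Rightarrow$(i) of Theorem \ref{thm1}. The paper leaves exactly these steps implicit in the paragraph preceding the corollary, so your write-up matches its approach.
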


The paper is organized as follows.
For the proof of our main theorem we will need two different models for $DJ(K)$ . They are
discussed in the next section. In Section 3 we will use some geometric constructions to produce
a splitting of $\lambda$ from a given coloring.
The final section contains the proof of Theorem \ref{thm1}.

If not specified otherwise, $K$ will always denote an $(n-1)$-dimensional
finite simplicial complex with $m$-vertices.

We would like to thank Nigel Ray and Natalia Dobrinskaya for many helpful discussions.

\section{Models for $DJ(K)$} \label{models}

Let $\catk$ denote the category whose objects are the faces of $K$ and whose arrows are given by
the subset relations between the faces. $\catk$ has an initial object given by the empty face.
Given a pair $(X,Y)$ of pointed topological space we can define  covariant functors
$$
X^K,\ (X,Y)^K \colon \catk \lra \Top.
$$
The functor $X^K$ assigns to each face $\alpha$ the cartesian product
$X^\alpha$ and to each morphism $i_{\alpha,\beta}$ the inclusion
$X^\alpha \subset X^\beta$ where
missing coordinates are set to the base point $*$.
If $\alpha=\emptyset$, then $X^\alpha$ is a point.
And $(X,Y)^K$ assigns to
$\alpha$ the product
$X^\alpha\times Y^{[m]\setminus \alpha}$ and to $i_{\alpha,\beta}$
the coordinate
wise inclusion
$X^\alpha\times Y^{[m]\setminus \alpha}\subset
X^\beta\times Y^{[m]\setminus \beta}$.
The inclusions $X^\alpha \subset X^{[m]}=X^m$ and $X^\alpha\times Y^{[m]-\alpha}\subset X^m$
establish inclusions
$$
\colim_{\catk} X^K \lra X^m,\ \ \colim_{\catk} (X,Y)^K \lra X^m.
$$

We are interested in two particular cases, namely the functor $X^K$ for the
classifying space $BT=\C P^\infty$ of the 1-dimensional
circle $T$ and the functor
$(X,Y)^K$ for the pair $(D^2,S^1)$.
The colimit
$$
Z_K\letbe \colim_{\catk}\, (D^2,S^1)^K
$$
is called the {\it moment angle
complex} associated to $K$. The inclusions $Z_K \subset (D^2)^m\subset \C^m$
allow to restrict the standard $T^m$-action on $\C^m$  to $Z_K$.
The Borel construction produces a fibration
$$
q_K\colon (Z_K)_{hT^m} \lra BT^m
$$
with fiber $Z_K$. Moreover, $B_TK \letbe (Z_K)_{hT^m}$ is a realization of
the Stanley-Reisner algebra $\Z[K]$ and a model for $DJ(K)$. That is there exists an isomorphism
$H^*(B_TK;\Z)\cong \Z[K]$  such that the map $H^*(q_K;\bZ)$
can be identified with the map $\bZ[m]\lra \Z[K]$ \cite[Theorem 4.8]{daja}.
We will use this model for geometric construction with our vector bundles.

Buchstaber and Panov gave a different construction for $DJ(K)$.
They showed that
$c(K)\letbe \colim_{\catk}\, BT^K$ is
homotopy equivalent to $B_T K$ and that
the inclusion
$$
c(K) \lra  BT^m
$$
is homotopic to $q_K$ \cite[Theorem 6.29]{bupa}. In particular, each face $\alpha\in K$ defines a map
$h_\alpha\colon BT^\alpha \lra c(K)$.
The model $c(K)$ will be used to produce a coloring from a
stable splitting of $\lambda$.

\begin{rem}
If $K$ is the triangulation of an $(n-1)$-dimensional sphere, the moment angle complex $Z_K$ is a manifold.
In this case, the tangent bundle $\tau_Z$ is a $(m+n)$-dimensional $T^m$-equivariant vector bundle,
which satisfies the analogue of Corollary \ref{equisplit}.
If $\tau_Z$ is stably equivariantly isomorphic to a direct sum of $r$
2-dimensional equivariant $T^m$-bundles over $Z_K$, then $K$ admits an $r$ coloring.
Again this follows from the fact that $(\tau_Z)_{hT^m}$ and $\lambda_\R$ are stably isomorphic
\cite[Section 6]{daja}.
\end{rem}

\section{Geometric constructions} \label{xi}

The $m$-dimensional torus $T^m$ acts coordinate wise on $\C^m$. And the diagonal action
of $T^m$ on $\C^m\times Z_K$ makes the projection $\C^m\times Z_K \lra Z_K$ onto the second factor
into a $T^m$-equivariant complex vector bundle over $Z_K$, denoted by $\lambda'$.
An application of the Borel construction produces the bundle $\lambda\letbe\lambda'_{hT^m}\da B_TK$
over $B_TK$ whose total Chern class is given by
$c(\lambda)=\prod_i (1+v_i)\in \bZ[K]$ and whose classifying map is the composition
$B_TK\larrow{q_K} BT^m \lra BU(m)$.
Since $T^m$ acts coordinatewise on $\C^m$, both bundles, $\lambda'$ and $\lambda$ split into a direct sum of
(equivariant) line bundles. Let $\C_j$ denote the $j$-the component of $\C^m$.
in particular, $T^m$ acts on $\C_j$ via the projection
$T^m\lra T^{\{j\}}$ onto the $j$-th component of $T^m$. The vector bundle
$\lambda'_j\letbe \C_j\times Z_K$ is $T^m$-equivariant, and $\lambda_j\letbe (\lambda'_j)_{hT^m}$ is a
1-dimensional complex vector bundle over $B_TK$. We have
$\lambda'\cong \bigoplus_j \lambda'_j$ and $\lambda\cong \bigoplus_j \lambda_j$.
All this can be found in \cite[Section 6]{daja}.

If $g\colon K\lra \Delta[r]$ is an r-coloring we want to construct an equivariant splitting of
$\lambda'\da Z_K$ into a direct sum of $T^m$-equivariant complex line bundles and a trivial
bundle $\C^{m-r}$. We will use ideas of Davis and Januczkiewicz discussed in \cite[Section 6.2]{daja}.
For each $i\in [r]$ we denote by $S_i\letbe g^{-1}(i)\subset [m]$  the preimage of $i$ and
by $s_i\letbe |S_i|$ the order of $S_i$.
There are two vector bundles associated with $S_i$, namely
the tensor product $\nu_i\letbe \bigotimes_{j \in S_i} \lambda'_j$
of all complex line bundles associated to the vertices contained in $S_i$ and the direct sum
$\eta_i\letbe \bigoplus_{j\in S_i} \lambda'_j$
of all these line bundles. Both are
$T^m$-equivariant vector bundles over $\Z_K$.

\begin{lem} \label{epsilondelta}
For all $i\in [r]$,  there exists an $T^m$-equivariant vector bundle isomorphism
$\nu_i \oplus \C^{s_i-1} \lra \eta_i$.
\end{lem}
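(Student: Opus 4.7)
\noindent\emph{Proof proposal.} The core input is the combinatorial consequence of $g$ being a non-degenerate simplicial map: any two distinct vertices $j, j' \in S_i$ cannot lie in a common face of $K$, since $g$ sends both $j$ and $j'$ to $i$ and would otherwise fail to be injective on such a face. My plan is to translate this into a geometric disjointness statement for the tautological $T^m$-equivariant sections $s_j(z) := z_j$ of the line bundle $\lambda'_j = \C_j \times Z_K$. From the defining description $Z_K = \colim_{\catk}(D^2,S^1)^K$, every point $z \in Z_K$ has the property that $\{l : z_l = 0\}$ is a face of $K$. Consequently, for distinct $j, j' \in S_i$ the vanishing loci of $s_j$ and $s_{j'}$ on $Z_K$ are disjoint; equivalently, at every $z \in Z_K$ at most one section in $\{s_j : j \in S_i\}$ can vanish.

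\medskip

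Fixing an ordering $S_i = \{j_1,\ldots,j_{s_i}\}$, I will exhibit an explicit $T^m$-equivariant bundle map
\[
f\colon \C^{s_i-1} \lra \eta_i = \bigoplus_{k=1}^{s_i} \lambda'_{j_k}
\]
sending the $k$-th trivial coordinate (for $2 \le k \le s_i$) to $s_{j_1}\,\iota_1 - s_{j_k}\,\iota_k$, where $\iota_l\colon \lambda'_{j_l} \hookrightarrow \eta_i$ is the summand inclusion. Equivariance is automatic because $s_{j_l}$ transforms under $T^m$ by exactly the $j_l$-th character, cancelling the character on $\iota_l$. The disjoint-vanishing property from the previous paragraph is precisely what is needed to check that $f$ is fiberwise injective at every $z \in Z_K$: any potential kernel relation is forced to be trivial because at most one of the coefficients $s_{j_k}(z)$ can vanish.

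\medskip

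Once $f$ is a constant-rank equivariant bundle injection, the cokernel $Q$ is a $T^m$-equivariant complex line bundle and we obtain a short exact sequence of equivariant bundles
\[
0 \lra \C^{s_i-1} \xrightarrow{\,f\,} \eta_i \lra Q \lra 0.
\]
Taking equivariant determinants identifies $Q \cong \det(\eta_i) \cong \bigotimes_{j \in S_i} \lambda'_j = \nu_i$. The sequence then splits equivariantly by averaging a Hermitian metric on $\eta_i$ over the compact torus $T^m$ and taking orthogonal complements, yielding the desired isomorphism $\nu_i \oplus \C^{s_i-1} \cong \eta_i$. The main obstacle is the first step: extracting from the combinatorial non-degeneracy of $g$ the geometric disjoint-vanishing statement for the sections $s_j$ on $Z_K$. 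Everything afterwards is a Koszul-type cokernel computation together with a standard invariant-metric splitting of an equivariant short exact sequence.
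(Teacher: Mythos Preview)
Your argument is correct. The key geometric input---that at most one of the coordinates $z_j$ with $j\in S_i$ can vanish on $Z_K$---is exactly what the paper extracts, and your equivariant embedding of $\C^{s_i-1}$ into $\eta_i$ is, after the change of coordinates $x_1=\sum_{k\ge 2}c_k$, $x_k=-c_k$, the same map $x\mapsto (z_{j_k}x_k)_k$ that the paper uses on the hyperplane $\{\sum_k x_k=0\}\subset\C^{s_i}$.

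The difference lies in how the $\nu_i$-summand is handled. The paper writes down a single explicit equivariant map $\nu_i\oplus\C^{s_i-1}\to\eta_i$, sending $(y,x,z)$ to $\bigl(y\prod_{k\ne j}\bar z_{j_k}+z_{j_k}x_k\bigr)_k$, and then checks directly that this is a fibrewise monomorphism. You instead only embed $\C^{s_i-1}$, identify the cokernel line bundle with $\nu_i$ via the equivariant determinant, and split the short exact sequence with an invariant Hermitian metric. Your route avoids the somewhat ad~hoc appearance of complex conjugates in the paper's formula and makes transparent \emph{why} the leftover line bundle must be $\nu_i$; the paper's route has the virtue of producing an explicit global isomorphism in one stroke, without invoking either the determinant functor or a metric-averaging argument. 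Both rest on the same injectivity computation, so the distinction is one of packaging rather than substance.
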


For simplicial complexes dual to the boundary of simple polytopes the claim is already stated in
\cite[Section 6.2]{daja}. We will give here a different proof.

\begin{proof}
For simplification we drop the subindex $i$ in the notation  and assume that $S=[s]$.
We will think of $\C^{s-1}\subset \C^s$ as the subspace given by
$\{(x_1,...,x_s)\in \C^s | \sum_k x_k=0\}$.
We define a map
$$
f: \C\times \C^{s-1} \times Z_K \lra \C^s\times Z_K
$$
by $f(y,x,z)\letbe (u,z)$ where the $j$-th coordinate $u_j$ of $u$ is given by
$u_j\letbe y\prod_{k\neq j, k\in [s]} \quer z_k + z_j x_j$. Here, $\quer z_k$ denotes
the complex conjugate of $z_k$.
If $T^m$ acts on $\C$ via the map $t\mapsto \prod_{j\in [s]} t_j$, trivially on
$\C^{s-1}$ and on $\C^s$ via the projection $T^m\lra T^s$ onto the first $s$ coordinates,
one can easily show that this map is $T^m$-equivariant. Moreover, with these actions,
the source is the total space of the bundle $\nu \oplus \C^{s-1}\da Z_K$
and the target the total space of $\eta\da Z_K$. Since both sides have the same dimension,
it is only left to show that $f$ is fiber wise a monomorphism.

By construction, any subset $\{j,k\}\subset [s]$ is a missing face in $K$.
Since $Z_K=\bigcup_{\alpha\in K} (D^2)^\alpha\times (S^1)^{[m]\setminus\alpha}$,
the space $(D^2)^{\{j,k\}}\times(S^1)^{[m]\setminus\{i,j\}}$
is not contained in  $Z_K$ and for $z=(z_1,...,z_m)\in Z_K$ there is at most one coordinate among $z_1,...,z_s$
which is trivial.

Now we assume that $f(y,x,z)=(0,z)$. In particular, we have $x_jz_j=-y\prod_{k\neq j} \quer z_k$.
If one of the coordinates $z_j$ vanishes, say $z_1=0$, then $z_j\neq 0$ for $j\neq 1$ and hence $y=0$
as well as $x_j=0$ for $j\neq 1$. Since $\sum_j x_j=0$, we also have $x_1=0$.

If $z_j\neq 0$ for all $j$, then $x_j=y\prod_{k\neq j} \quer z_k/z_j$ and
$0=\sum_j x_j=\sum_j y\prod_{k\neq j} \quer z_k/z_j=y\sum_j \prod_{k\neq j} \quer z_k/z_j$.
Multiplying with $\prod_j z_j$ shows that $y\sum_j\prod_{k\neq j} \quer z_kz_k=0$ and hence that
$y=0$ as well as $x_j=0$ for all $j$.
This shows that $f$ is a fiber wise monomorphism and finishes the proof.
\end{proof}

\begin{cor} \label{equisplits}
Let $K\lra  \Delta[r]$ be an $r$-coloring of a finite simplicial complex.
Then the following holds:
\nz
(i) The bundle $\lambda'\da Z_K$ splits equivariantly into a direct sum of $r$ equivariant complex
line  bundles  and a trivial bundle.
\nz
(ii) The bundle $\lambda\da DJ(K)$ splits into a direct sum of $r$ complex line bundles and a trivial bundle.
\end{cor}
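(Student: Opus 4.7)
The plan is to leverage Lemma \ref{epsilondelta} together with the observation that an $r$-coloring $g\colon K\lra\Delta[r]$, viewed on the vertex set, produces a set-theoretic partition $[m]=S_1\sqcup\cdots\sqcup S_r$ by the preimages $S_i=g^{-1}(i)$. Indeed, as a simplicial map $g$ is defined on each vertex, and distinct vertices with the same color cannot be joined by an edge, so the $S_i$ do partition $[m]$. This is the organizing observation for the proof.

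For part (i), the plan is to unwind the line-bundle decomposition of $\lambda'$ along the partition. The equivariant splitting $\lambda'\cong\bigoplus_{j\in[m]}\lambda'_j$ recorded at the start of Section \ref{xi} can be reindexed according to the partition into the $T^m$-equivariant decomposition
\[
\lambda'\;\cong\;\bigoplus_{i=1}^r\Bigl(\bigoplus_{j\in S_i}\lambda'_j\Bigr)\;=\;\bigoplus_{i=1}^r\eta_i.
\]
Then I would apply Lemma \ref{epsilondelta} to each summand $\eta_i$ to replace it equivariantly by $\nu_i\oplus\C^{s_i-1}$, collect the trivial summands, and count dimensions: since $\sum_{i=1}^r s_i=m$, the total trivial rank is $\sum_{i=1}^r (s_i-1)=m-r$. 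This yields an equivariant isomorphism $\lambda'\cong\bigl(\bigoplus_{i=1}^r\nu_i\bigr)\oplus\C^{m-r}$.

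For part (ii), the plan is simply to apply the Borel construction to the equivariant splitting from (i). The Borel construction is fiberwise, so it carries equivariant direct sums of $T^m$-vector bundles over $Z_K$ to direct sums of vector bundles over $B_T K\simeq DJ(K)$. Applied to the line bundles $\nu_i$ this produces complex line bundles $(\nu_i)_{hT^m}$ over $DJ(K)$, and applied to the equivariantly trivial summand $\C^{m-r}$ — on which, by the construction in Lemma \ref{epsilondelta}, $T^m$ acts trivially — it produces the genuinely trivial bundle $\C^{m-r}$ over $DJ(K)$, since $(\C^{m-r}\times Z_K)_{hT^m}=\C^{m-r}\times(Z_K)_{hT^m}$ when the $T^m$-action on the fiber is trivial.

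Given Lemma \ref{epsilondelta}, neither step involves any real obstacle; the only point that requires care is the last one, namely verifying that the trivial summand in the equivariant splitting carries the trivial $T^m$-action so that its Borel construction remains a trivial bundle. This is immediate from the explicit description of $\C^{s_i-1}\subset\C^{s_i}$ in the proof of Lemma \ref{epsilondelta}, where $T^m$ was declared to act trivially on that subspace.
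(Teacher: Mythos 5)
Your argument is correct and is essentially identical to the paper's proof: both reindex the equivariant splitting $\lambda'\cong\bigoplus_j\lambda'_j$ along the color classes, apply Lemma \ref{epsilondelta} to each $\eta_i$, sum the trivial ranks to $m-r$, and obtain (ii) by applying the Borel construction. Your added remark that the trivial summand carries the trivial $T^m$-action (so its Borel construction stays trivial) is a point the paper leaves implicit, and you justify it correctly from the explicit action in the proof of the lemma.
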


\begin{proof}
By Proposition \ref{epsilondelta} we have
$$
\lambda'\cong \bigoplus_{j=1}^r \bigoplus_{i\in S_j} \lambda'_i\cong \bigoplus_{j=1}^r(\nu'_j\oplus \C^{s_i-1})
\cong (\bigoplus_{j=1}^r \nu'_j) \oplus \C^{m-r}.
$$
This proves the first part, the second follows from the first by applying the Borel construction.
\end{proof}

\section{Proof of Theorem \ref{thm1}}

The proof needs some preparation. For topological spaces $X$ and $Y$ we denote by $[X,Y]$
the set of homotopy classes of maps from $X$ to $Y$ and for two compact Lie groups
$G$ and $H$ by $\hom(H,G)$ the set of Lie group homomorphism $H\lra G$.

Let $G$ be a compact connected Lie group with maximal torus $j\colon\! T_G\hookrightarrow  G$
and Weyl group $W_G$.
Since the action of $W_G$ on $T_G$ is induced by conjugation with elements of $G$,
the composition of $w\in W_G$ and $j$ induces a map between  the classifying spaces
homotopic to $Bj$.
And passing to classifying spaces followed by composing with $Bj$ induces a map
$\hom(H,T_G) \lra [BH,BG]$ which factors through the orbit space of the $W_G$-action  on $\hom(H,T_G)$
and provides a map
$\hom(H,T_G)/W_G \lra [BH,BG]$.
The following two facts may be found in \cite{no1} and are needed for the proof of our main theorem.

\begin{thm} \cite{no1} \label{facts}
Let $G$ be a connected compact Lie group and $S$ a torus.
\nz
(i) The map $\hom(S,T_G)/W_G \lra [BS,BG]$ is a bijection.
\nz
(ii) The map $[BS,BG]\lra \Hom(H^*(BG;\Q),H^*(BS;\q))$ is an injection.
\end{thm}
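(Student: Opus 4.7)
The plan is as follows. For (i), the comparison map sends $\phi\colon S\to T_G$ to the homotopy class of $Bj\circ B\phi\colon BS\to BT_G\to BG$. Well-definedness on $W_G$-orbits is immediate from the remarks preceding the theorem, since each $w\in W_G$ is realized by conjugation by an element of $N_G(T_G)$ and inner automorphisms of $G$ induce self-maps of $BG$ homotopic to the identity. The easy half is identifying $[BS,BT_G]$ with $\hom(S,T_G)$: both spaces are finite products of $K(\Z,2)$'s, so homotopy classes of maps are computed by $H^2(-;\Z)$, which agrees with the lattice of continuous homomorphisms of tori.

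The heart of (i) is showing that every map $BS\to BG$ lifts through $Bj$, uniquely up to the $W_G$-action. I would argue this via the Dwyer-Zabrodsky-Notbohm machinery. Fix a prime $p$ and consider the $p$-completed mapping space $\map(BS,BG)\p$. By Lannes' $T$-functor theory, its components are enumerated by unstable algebra maps $H^*(BG;\fp)\to H^*(BS;\fp)$, and for $p\nmid |W_G|$ the Borel description $H^*(BG;\fp)\cong H^*(BT_G;\fp)^{W_G}$ together with an invariant-theoretic argument identifies these with $\hom(S,T_G)/W_G$. The exceptional primes require a finer analysis using the normalizer $N_G(T_G)$; patching with the rational component count (where the statement follows from Borel's theorem and Chevalley-Shephard-Todd) via the arithmetic fracture square then delivers the integral bijection.

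For (ii), combine (i) with Borel's theorem $H^*(BG;\Q)\cong H^*(BT_G;\Q)^{W_G}$. The induced rational ring map of a class $[\phi]\in \hom(S,T_G)/W_G$ is the restriction to $W_G$-invariants of $B\phi^*\colon H^*(BT_G;\Q)\to H^*(BS;\Q)$. Since $H^*(BT_G;\Q)$ and $H^*(BS;\Q)$ are free symmetric algebras on their degree-$2$ pieces, the homomorphism $\phi$ is determined by the induced linear map $\mathfrak{s}\to \mathfrak{t}_G$ on Lie algebras. The required injectivity thus reduces to the invariant-theoretic claim that two elements of $\Hom_\Q(\mathfrak{s},\mathfrak{t}_G)$ inducing the same ring map out of $\Q[\mathfrak{t}_G^*]^{W_G}$ lie in the same $W_G$-orbit; this follows from Chevalley-Shephard-Todd applied pointwise along linear functionals in $\mathfrak{s}^*$, since the invariants separate $W_G$-orbits on $\mathfrak{t}_G$.

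The main obstacle is the surjectivity half of (i): from an arbitrary $f\colon BS\to BG$, producing a genuine factorization through $BT_G$. At the level of cohomology this lifting is routine from Borel's description of $H^*(BG;\Q)$, but promoting an algebraic lift to an honest map of spaces is the delicate step requiring Lannes' unstable-algebra technology or the obstruction-theoretic methods of Dwyer-Zabrodsky refined in \cite{no1}; injectivity is comparatively mild once a factorization through $Bj$ exists, as two such lifts correspond to $W_G$-conjugate homomorphisms by the classical maximal-torus uniqueness theorem.
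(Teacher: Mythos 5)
A preliminary point: the paper itself offers no proof of this statement --- it is imported wholesale from \cite{no1} (``The following two facts may be found in \cite{no1}''), so the only fair comparison is with the argument of that reference. Measured that way, your sketch does follow essentially the same route: $[BS,BT_G]\cong\hom(S,T_G)$ by Eilenberg--MacLane considerations, well-definedness because Weyl elements are realized by inner automorphisms, the identification $\hom(S,G)/\mathrm{conj}\cong\hom(S,T_G)/W_G$ via the classical centralizer trick, and surjectivity by $p$-complete mapping-space technology glued over all primes and the rationalization. But be clear that this surjectivity step is the entire content of the cited paper, and your proposal names the machinery rather than runs it: Lannes' $T$-functor computes components of $\operatorname{map}(BV,BG^\wedge_p)$ directly only for $V$ elementary abelian, so for a torus $S$ one must exhaust $S$ by its $p$-discrete approximation $\operatorname{colim}_n(\Z/p^n)^{\dim S}$, control the limit of mapping spaces (a $\lim^1$ and higher-obstruction issue), and the component count at primes dividing $|W_G|$ is genuinely not $\hom(S,T_G)/W_G$ prime-by-prime in any obvious way --- this is precisely where \cite{no1} works hardest, and your ``finer analysis at exceptional primes'' is a placeholder, not a step one could check.

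There is also a concrete local gap in your treatment of (ii). ``Chevalley--Shephard--Todd applied pointwise'' is both misattributed and incomplete: separation of $W_G$-orbits in $\mathfrak{t}_G$ by rational invariants is an elementary averaging argument (Chevalley--Shephard--Todd asserts polynomiality of the invariant ring, which is not needed), and pointwise separation only yields, for each rational $x\in\mathfrak{s}$, some $w_x\in W_G$ with $L_2(x)=w_xL_1(x)$, with $w_x$ a priori depending on $x$. To conclude that the two linear maps lie in one $W_G$-orbit you still need a uniformity step: the sets $\{x\in\mathfrak{s}: L_2(x)=wL_1(x)\}$, $w\in W_G$, are finitely many linear subspaces covering $\mathfrak{s}$, and a vector space over an infinite field is not a finite union of proper subspaces, so a single $w$ works globally (and then integrally, since $W_G$ preserves the lattice). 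With that patch your deduction of (ii) is sound, and it correctly exhibits the injectivity half of (i) as a consequence of the same rational detection, rather than of the maximal-torus conjugacy theorem alone.
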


The rational cohomology $H^*(BG;\q)\cong H^*(BT_G,\q)^{W_G}$ is the ring of polynomial invariants
of the induced $W_G$-action on the polynomial algebra $H^*(BT_G;\q)$.
For $G=SO(2k+1)$ the maximal torus $T_{SO(2k+1)}=T^k$ is an $k$-dimensional torus
and we can identify $H^*(BT_{SO(2k+1)});\z)$ with $\z[k]=\z[v_1,...,v_k]$. The Weyl group
$W_{SO(2k+1)}$ is the wreath product $\z/2\wr \Sigma_l$ where $(\z/2)^k$ acts on $T^k$ via
coordinate wise complex conjugation and $\Sigma_k$ via permutations of the coordinates.
The rational cohomology of $BSO(2k+1)$ is then  given by
$$
H^*(BSO(2k+1);\q)\cong \q[k]^{\z/2\wr \Sigma_k}\cong \q[p_1,...,p_k].
$$
The classes $p_i$ are already defined over $\Z$. On the one hand $p_i\in H^{4i}(BSO(2k+1);\Z)$ is the universal $i$-th
Pontrjagin class for oriented bundles and on the other hand
$p_i =(-1)^i\sigma_i(v_1^2,...,v_k^2)\in \z[k]^{\z/2\wr \Sigma_k}$ is
up to a sign the $i$-th elementary symmetric polynomial in the squares
of the generators of $\Z[k]$. In particular,
for an oriented $(2k+1)$-dimensional real vector bundle $\rho$ over a space $X$, the total
Pontrjagin class $p(\rho)=1+\sum_{i=1}^k p_i(\rho)$ determines completely the map
$H^*(BSO(2k+1);\q)\lra H^*(X;\q)$ induced by the classifying map $\rho\colon X \lra BSO(2k+1)$.

\begin{exa} \label{basicex}
Let $\rho \colon BT^s\lra BSO(2k+1)$ be the composition of a coordinate wise inclusion
$\hat\rho \colon BT^s \lra BT^k$ followed
by the maximal torus inclusion $BT^k\lra BSO(2k+1)$. Then the total Pontrjagin class of $\rho$ is given by
$p(\rho)=\prod_{i=1}^s (1-v_i^2)$, where we identify $H^*(BT^s;\z)$ with $\z[v_1,...v_s]$.

By Theorem \ref{facts}, up to homotopy every vector bundle $\omega \colon BT^s\lra BSO(2k+1)$
is the composition of a lift $\hat\omega \colon BT^s \lra BT^k$ and $Bj$. If
$p(\omega)=p(\rho)$ then both maps $\rho$ and $\omega$ are homotopic and the underlying homomorphisms
$j_\omega, j_\rho \colon T^s\lra T^k$
of the lifts
$\hat \omega$ and $\hat \rho$ differ only by an element of the Weyl group (Theorem \ref{facts}).
In particular, since $j_\rho$ is given by a coordinatewise inclusion, the homomorphism $j_\omega$
also is a coordinate wise inclusion  possibly followed by complex conjugation on some coordinates.
\end{exa}

{\it Proof of Theorem \ref{thm1}:}
If $K\lra \Delta[r]$ is an $r$-coloring, Corollary \ref{equisplits}
provides the appropriate splitting for $\lambda$.
The splitting conditions on $\lambda$ can be put into a hierarchy, a splitting of $\lambda$
establishes a splitting of $\lambda_\R$ and a stable isomorphism
between
$\lambda$ and a direct sum of $r$ complex line bundles, and the two latter conditions a stable
isomorphism between $\lambda_\R$ and a direct sum of $r$ $2$-dimensional real bundles.
It is only left to show that this last stable isomorphism allows to construct a coloring.

We will work with the model $c(K)$ for $DJ(K)$ and again describe vector bundles over $c(K)$
by their classifying maps. In particular, for $t\geq 2m$ the real vector bundle
$\rho_t\letbe \lambda_\R\oplus R^{t-2m}$ is a map $\rho_t: c(K) \lra BO(t)$.
Since we are considering stable splittings, we can pass from $\rho_t$ to $\rho_{t+1}$, if necessary,
and assume that $t=2s+1$ is odd. This will simplify the discussion. For example,
if $t$ is odd, we have $BO(t)\simeq BSO(t)\times B\z/2$. And since $c(K)$ is simply connected,
the bundle $\rho_t$ has a unique orientation given by
the first coordinate of the map $\rho_t\colon c(K)\lra BSO(t)\times B\z/2$. We also
denote this map by $\rho_t$. The total Pontrjagin class of $\rho_t$ is given by
$p(\rho_t)=\prod_{i=1}^m (1-v_i^2)$ \cite[Section 6]{daja}.

Let $\phi\colon BT^r\lra BSO(t)$ be the map induced by the composition
of the coordinate wise inclusion
$T^r\subset T^{s}$ into the first $r$ coordinates followed
by the maximal torus inclusion $T^{s}=T_{SO(t)}\subset SO(t)$.
A splitting $\rho_t\cong (\bigoplus_{j=1}^r \theta_j) \oplus \R^{t-2r}$ establishes a map
$\hat \rho_t\colon c(K)\lra BT^r$ such that $\phi \hat\rho_t\simeq \rho_t$.

Now let $\alpha\in K$ be a face and $h_\alpha\colon BT^\alpha \lra c(K)$ the associated map.
The composition $\hat\rho_t h_\alpha\colon  BT^\alpha \lra BT^r$ determines a unique homomorphism
$j_\alpha \colon T^\alpha \lra T^r$. The total Pontrjagin class of $\phi\hat\rho_t h_\alpha$ is given by
$p(\phi \hat\rho_t h_\alpha)=\prod_{i\in \alpha}(1-v_i^2)$. Example \ref{basicex} shows that
$j_\alpha$ is a coordinate wise inclusion $T^\alpha \lra T^k$ possibly followed by
complex conjugation on some coordinates. The coordinate wise inclusion defines an injection
$\alpha \lra [r]$. Since for any inclusion of faces $\beta \subset \alpha$ the restriction
$(\hat\rho_th_\alpha)|_{BT^\beta}$ equals the composition $\hat\rho_t h_\beta$, the underlying homomorphisms
satisfies the formula $j_\alpha|_\beta=j_\beta$. We can conclude that the collection of all these maps
defines a map $[m]\lra [r]$, whose restriction to any face of $K$ is an injection. This establishes a
non degenerate simplicial map $K\lra \Delta[r]$ which is an $r$-coloring for $K$. \qed

%
%
%
%
%
%
%
%
%
%
%


%
%
\end{document}